\newcommand{\x}{\mathbf{x}}
\newcommand{\uu}{\mathbf{u}}
\newcommand{\yy}{\mathbf{y}}
\newcommand{\f}{\mathbf{f}}
\newcommand{\g}{\mathbf{g}}
\newcommand{\A}{\mathbf{A}}
\newcommand{\B}{\mathbf{B}}
\newcommand{\C}{\mathbf{C}}
\newcommand{\D}{\mathbf{D}}
\newcommand{\M}{\mathbf{M}}
\newcommand{\X}{\mathcal{X}}
\newcommand{\G}{\mathcal{G}}
\newcommand{\U}{\mathcal{U}}
\newcommand{\E}{\mathcal{E}}
\newcommand{\R}{\mathbb{R}}
\newcommand{\ETA}{\pmb{\eta}}
\newcommand{\NU}{\pmb{\nu}}
\newtheorem{theorem}{Theorem}
\newtheorem{definition}{Definition}
\newtheorem{assumption}{A\hspace{-0.1cm}}
\newtheorem{remark}{Remark}
\newtheorem{proposition}{Proposition}
\newtheorem{problem}{Problem}
\title{\LARGE \bf
Guaranteed Fault Detection and Isolation for Switched Affine Models}
\author{Farshad Harirchi\;\;\;\; Sze Zheng Yong\;\;\;\; Necmiye Ozay
	\thanks{This work is supported in part by DARPA grant N66001-14-1-4045 and an 
Early Career Faculty grant from NASA's Space Technology Research Grants Program.}
	\thanks{F. Harirchi and N. Ozay are with the Electrical Engineering and Computer Science Department, University of Michigan, Ann Arbor, MI, 48109 (email: {\tt {\{harirchi,necmiye\}@umich.edu}});
	S.Z. Yong is with the School for Engineering in Matter, Transport and Energy, Arizona State University, Tempe, AZ, 85281 (email: {\tt {szyong@asu.edu}}).}\vspace{-0.cm}}
\begin{document}

\maketitle
\thispagestyle{empty}
\pagestyle{empty}
\begin{abstract}
This paper considers the problem of fault detection and isolation (FDI) for switched affine models. We first study the model invalidation problem and its application to guaranteed fault detection. Novel and intuitive optimization-based formulations are proposed for model invalidation and $T$-distinguishability problems, which we demonstrate to be computationally more efficient than an earlier formulation that required a complicated change of variables. 
Moreover, we introduce a \emph{distinguishability index} as a measure of separation between the system and fault models, which offers a practical method for finding the smallest receding time horizon that is required for fault detection, and for finding potential design recommendations for ensuring $T$-distinguishability. 
Then, we extend our fault detection guarantees to the problem of fault isolation with multiple fault models, i.e., the identification of the type and location of faults, by introducing the concept of $I$-isolability. 
An efficient way to implement the FDI scheme is also proposed, whose run-time  does not grow with the number of fault models that are considered. 
Moreover, we derive bounds on detection and isolation delays and present an adaptive 
scheme for reducing isolation delays. 
Finally, the effectiveness of the proposed method is illustrated using several examples, including an HVAC system model with multiple faults.
\end{abstract}

\section{Introduction} \label{sec:intro}
Cyber-physical systems (CPS), i.e., systems with integrated computation, networking, and physical processes, are becoming increasingly common in our daily lives. Such systems include critical infrastructures such as traffic, power and water networks, as well as autonomous vehicles, aircrafts, home appliances and manufacturing processes. 
However, some major incidents involving these critical infrastructure systems as a result of cyber-attacks and system failures have taken place in the recent years and are a big source of concern. Hence, the reliability and security of CPS is paramount for their successful implementation and operation.  The detection and isolation of faults and anomalies in CPS play an important role in enhancing the reliability of these systems, and in understanding the vulnerability of system components to failures and attacks.


\subsubsection{Literature Review}
The study of fault detection began with the introduction of the first failure detection filter by Beard in 1971 \cite{Beard1971Failure}. Since then, fault diagnosis has attracted a great deal of attention and has become an integral part of most, if not all system designs. Researchers have mainly approached the fault detection and isolation problem by employing either data-driven techniques or model-based approaches. These methods can, in general, be further grouped into active and passive approaches. In active fault detection, the system is excited by a carefully designed input \cite{Nikoukhah1998Guaranteed,Nikoukhah2006Auxiliary,Harirchi2017active}, while in passive methods, the behavior of the system is not controlled or altered by the FDI scheme \cite{Frank1997Survey,Patton1997Observer}. 

In broad strokes, model-based fault detection and isolation schemes in the literature can be categorized into two classes, i.e., approaches that are based on residual generation and on set-membership. The former approach is more common in the fault diagnosis literature, and in this approach, the difference between the measurements and the estimates is defined as a residual or a symptom \cite{Simani2003Model}. Two major trends in the residual generation techniques are methods based on observers 
\cite{Frank1993Advances,Frank1997Survey,Sneider1996Observer,Hammouri1999Observer,  Paoletti2008Necessary, Abdo2012Fault,  Pan2015Online,Narasimhan2007Model} and parameter estimation  \cite{Isermann1993Fault,Liu2000Fault}.

On the other hand, set-membership based fault detection and isolation techniques are proposed with the goal of 
providing guarantees for the detection of some specific faults. Most of these methods operate by discarding models that are not compatible with observed data, rather than identifying the most likely model. There is an extensive literature on set-membership based methods for active fault detection of linear models \cite{Campbell2004Auxiliary, Scott2014Input, Rosa2010Fault}. In \cite{Harirchi2015Model,Harirchi2016Model}, we posed set-membership based guaranteed passive fault detection approaches for the class of switched affine models and polynomial state space models. These approaches are developed by utilizing ideas from model invalidation \cite{Anderson2009Validation,OzayConvex2014} and taking advantage of recent advances in optimization. In addition, a concept called $T$-distinguishability has been introduced for finding conditions under which the fault detection scheme can be applied in a receding horizon manner without compromising detection guarantees. $T$-distinguishability is closely related to the concept of input-distinguishability of linear systems \cite{Lou2009Distinguishability,Rosa2011Distinguishability} and mode discernibility in hybrid systems \cite{Babaali2004Observability}.

\subsubsection{Main Contributions and Paper Structure}
In this paper, we consider a passive fault detection and isolation scheme for  switched affine systems using an optimization-based model invalidation framework, that improves and expands the results of \cite{Harirchi2015Model} on fault detection. We provide novel formulations of the model invalidation and $T$-distinguishability algorithms that we demonstrate to be noticeably faster than the previous formulation in \cite{Harirchi2015Model} and that have the added advantage of simplicity as no complicated change of variables are needed. Furthermore, we introduce a measure of separation between models, called \emph{distinguishability index}. By reformulating the $T$-distinguishability optimization problem in \cite{Harirchi2015Model}, we can compute the distinguishability index as a byproduct. 
This index offers a practical way to find out if a finite receding time horizon exists, and suggests potential design options for ensuring $T$-distinguishability. 

We then consider the fault isolation problem using model invalidation and introduce the concept of $I$-isolability when multiple faults are present. Similar to fault detection, we propose a computationally efficient optimization problem to check whether a given set of fault models is $I$-isolable or not. 
Moreover, we propose a fault diagnosis scheme that not only detects the occurrence of a fault, but also outputs a list of potential faults along with their associated `likelihoods' in the form of distinguishability indices. Further, a theoretical analysis for bounds on detection and isolation delays is provided and an adaptive fault isolation scheme is proposed to reduce isolation delays. The run-time of our FDI scheme also does not grow with the number of fault models. Finally, these results are illustrated using a numerical model of a heating, ventilating, and air conditioning (HVAC) system.


\section{Preliminaries} \label{sec:prelim}
In this section, the notation used throughout the paper and the modeling framework we consider are described.

\subsection{Notation}
Let $ \x \in \R^n$ denote a vector and $ \M \in \R^{n\times m}$ represent a matrix. The infinity norm of a vector $\textbf{x}$ is denoted by $\| \textbf{x} \| \doteq \max_i |\x^i|$, where $\x^i$ denotes the $i^{th}$ element of vector $\x$.  The set of positive integers up to $n$ is denoted by $\mathbb{Z}_n^{+} $, and the set of non-negative integers up to $n$ is denoted by $\mathbb{Z}_n^{0}$.

\subsection{Modeling Framework}
In this paper, we consider systems that can be represented by discrete-time switched affine (SWA) models. 

\begin{definition}
	(SWA Model) A switched affine model is defined by:
	\begin{equation} \label{eqn:USWA}
	\mathcal{G} = (\X, \E, \U , \{ G_i \}_{i=1}^m),
	\end{equation} 
	where $\X \subset \R^n$ is the set of states, $\E \subset \R^{n_y+n_p}$ is the set of measurement and process noise signals, $\mathcal{U} \subset \R^{n_u}$ is the set of inputs and $\{G_i\}_{i=1}^m$ is a collection of $m$  modes. For all $i\in \mathbb{Z}_{m}^+$, the $i^{th}$ mode is an affine model:
	\begin{align}
	G_i = \{ \A_i,  \B_i, \C_i, \D_i, \f_i,  \g_i \}.
	\end{align}
	The evolution of $\G$ is governed by:
	\begin{equation} \label{eqn:SWA}
	\begin{aligned} 
	\x_{t+1}  	& = \mathbf{A}_{\sigma_t} \x_t + \B_{\sigma_t} \uu_t + \f_{\sigma_t} + \NU_t,\\
	\mathbf{y}_t  	& = \C_{\sigma_t} \x_t+ \mathbf{D}_{\sigma_t} \mathbf{u}_t+ \g_{\sigma_t} +\ETA_t,
	\end{aligned}
	\end{equation}
	where $\NU \in \R^{n_p}$ and $\ETA \in \R^{n_y}$ denote the process and measurement noise signals, respectively, and $\sigma_t$ indicates the active mode at time $t$.
\end{definition}
\begin{remark}
	We assume $\X, \E, \U$ are convex and compact sets. In particular, we consider the following form for the admissible sets:
	\begin{equation}
	\begin{aligned}
	& \X = \{ \x \mid P\x \leq p \}, \; \E \hspace{-0.05cm}=\hspace{-0.05cm} \{ [\ETA^\intercal \; \; \NU^\intercal]^\intercal \mid \|\ETA\| \leq \epsilon_{\eta}, \|\NU\| \leq \epsilon_{\nu} \},\\
	& \U = \{ \uu \mid \|\uu\| \leq U \}, 
	\end{aligned}
	\end{equation}
	where $P \in \R^{n_p \times n}$ and $p \in \R^{n_p}$. Note that our analysis holds true for any $\X, \E, \U$ that are convex sets, but for 
	simplicity in notation, we use the above mentioned admissible sets.
\end{remark}

We define the fault model as follows:
\begin{definition}[Fault Model] \label{def:fault_mod} A \emph{fault model} for a switched affine system $\G= (\X, \E, \U ,  \{ G_i\}_{i=1}^m)$ is another switched affine model $\bar{\G}= (\bar{\X}, \bar{\E}, \bar{\U} , \{ \bar{G}_i\}_{i=1}^{\bar{m}})$ with the same number of states, inputs and outputs. 
\end{definition}

Further, to describe our framework of model invalidation and $T$-distinguishability for fault detection and isolation in the next section, we define the following.
\begin{definition}[Length-$N$ behavior] The \emph{length-$N$ behavior} associated with an SWA system $\G$ is the set of all length-$N$ input-output trajectories compatible with $\G$, given by the following set:\vspace{-0.2cm}
	
\small{ 
\begin{equation*}
	\begin{aligned}
	\mathcal{B}^N_{swa}(\G) := \big \{ \{\uu_t, \yy_t\}_{t=0}^{N-1} \mid \uu_t \in \mathcal{U}  \text{ and } \exists \x_t\in \mathcal{X}, \sigma_t\in \mathbb{Z}_m^+, \\ 
	[\ETA_t^\intercal \; \; \NU_t^\intercal]^\intercal \in \mathcal{E}, \text{ for } t=0,\ldots, N-1 \text{ s.t. } \eqref{eqn:SWA} \text{ holds} \big \}.
	\end{aligned} 
\end{equation*} }
\normalsize
\hspace{-0.15cm}Moreover, with a slight abuse of terminology, we will call $\mathcal{B}^N_{swa}(\G)$ the \emph{behavior} of the system $\G$ for conciseness.
\end{definition} 

\section{Model Invalidation and $T$-Detectability} \label{sec:modelInv}

\subsection{Model Invalidation}
 In our previous work \cite{Harirchi2015Model,Harirchi2016Model}, we established a theoretical framework that can be utilized in order to develop fault detection schemes based on the achievements in model invalidation, a framework that we will also consider in this paper. The model invalidation problem is to check whether some given data can be represented by a model or not. More formally, the model invalidation problem is as follows: 
\begin{problem}[Model Invalidation] \label{Pro:MIP}
	Given an SWA model $\G$ and an input-output sequence $\big \{\uu_t, \yy_t \big \}_{t=0}^{N-1}$, determine whether or not the input-output sequence is contained in the behavior of $\G$, i.e., whether or not the following is true:
	\begin{align} \label{eqn:MIPDEF}
	\big \{\uu_t,\yy_t \big \}_{t=0}^{N-1} \in \mathcal{B}_{swa}^N(\G).
	\end{align}
\end{problem}

Clearly, if the model is invalidated by data, i.e., \eqref{eqn:MIPDEF} does not hold, and the model is precise, it is equivalent to the data representing an abnormal behavior. Hence, model invalidation can be used as a fault detection scheme. Moreover, our previous work \cite{Harirchi2015Model} has shown that model invalidation problem for SWA models can be posed as a Mixed-Integer Linear Program (MILP) feasibility check problem. 

In this paper, we propose a new MILP formulation, that we believe is much more intuitive and computationally efficient. We obtain this novel formulation by taking advantage of Special Ordered Set of degree 1 (SOS-1) constraints \cite{Beale1976Global}, that are readily implementable in most off-the-shelf optimization softwares. In brief, an SOS-1 constraint is a set of variables for which at most one variable in the set may be non-zero. Our new formulation is cleaner because this type of constraints allows us to formulate the feasibility check problem without introducing complicated change of variables as was previously done in \cite{Harirchi2015Model}. Moreover, SOS-1 constraints, which are by nature integral constraints,  make the branch and bound search procedures noticeably faster (see, e.g., \cite[Section 3.3.4]{Gueret1999Applications} for a discussion). Our new model invalidation problem using SOS-1 constraints is presented below, which we will demonstrate to be much faster than an earlier formulation \cite{Harirchi2015Model,Harirchi2016Guaranteed} in Section \ref{sec:runtime}. 
\begin{proposition}\label{prop1}
Given an SWA model $\G$ and an input-output sequence $\big \{ \mathbf{u}_t,{\mathbf{y}}_t\big \}_{t=0}^{N-1}$, the model is invalidated if and only if the following problem is infeasible.
\small{\begin{equation} \tag{P$_{MI}$}
\hspace*{-0.3cm}\begin{aligned} \label{eqn:firstmilp}
	\text{Find } & \x_t, \ETA_t, \NU_t, a_{i,t}, \mathbf{s}_{i,t}, \mathbf{r}_{i,t} \text{ for }  \forall t\in\mathbb{Z}_{N-1}^0, \forall i\in\mathbb{Z}_m^+  \\
	\text{s.t. } & \forall j\in \mathbb{Z}_n^+,\forall k\in \mathbb{Z}_{n_y}^+,\forall l\in \mathbb{Z}_{n_p}^+, \text{ } \forall t\in\mathbb{Z}_{N-1}^0, \text{ we have: } \\ 
	& \x_{t+1} = \A_i \x_t + \B_i \uu_t + \f_i + \NU_t + \mathbf{s}_{i,t}, \\
	& \yy_t = \C_{i} \x_t + \mathbf{D}_{i} \uu_t + \g_{i} + \ETA_t + \mathbf{r}_{i,t}, \\
	& P \x_t \leq p, \;  a_{i,t} \in \{0,1\}, \; \textstyle \sum_{i\in \mathbb{Z}_{m}^+} a_{i,t} = 1, \; \| \NU_{t} \| \leq \epsilon_{\nu}, \\
	&   \| \ETA_t \| \leq \epsilon_{\eta}, \; (a_{i,t}, \mathbf{s}_{i,t}^j): \text{SOS-1}, \; (a_{i,t}, \mathbf{r}_{i,t}^k): \text{SOS-1}, 
\end{aligned}\hspace*{-0.6cm}
\end{equation}} \normalsize
\hspace{-0.15cm}where $\mathbf{s}_{i,t}$ and $\mathbf{r}_{i,t}$ are slack variables that are free when $a_{i,t}$ is zero and zero otherwise.  We refer to this problem as $Feas(\{\uu_t,\yy_t\}_{t=0}^{N-1}, \G)$.
\end{proposition}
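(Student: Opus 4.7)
The plan is to prove the biconditional by establishing that feasibility of the MILP \eqref{eqn:firstmilp} is equivalent to the input-output sequence lying in $\mathcal{B}^N_{swa}(\G)$. The contrapositive of this equivalence is precisely the claim that infeasibility is equivalent to invalidation. The key technical point is that the SOS-1 constraints, together with the binary-sum constraint $\sum_{i} a_{i,t}=1$, correctly encode the disjunctive ``exactly one mode is active at time $t$'' logic of the SWA dynamics \eqref{eqn:SWA}.

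First I would establish the ``behavior implies feasibility'' direction. Suppose $\{\uu_t,\yy_t\}_{t=0}^{N-1} \in \mathcal{B}^N_{swa}(\G)$. Then by definition there exist admissible $\x_t \in \X$, $[\ETA_t^\intercal \; \NU_t^\intercal]^\intercal \in \E$, and a mode sequence $\sigma_t \in \mathbb{Z}_m^+$ satisfying \eqref{eqn:SWA}. I would construct a feasible point of \eqref{eqn:firstmilp} by keeping $\x_t,\ETA_t,\NU_t$ as given, setting $a_{\sigma_t,t}=1$ and $a_{i,t}=0$ for $i\neq \sigma_t$, and defining the slacks as $\mathbf{s}_{\sigma_t,t}=0$, $\mathbf{r}_{\sigma_t,t}=0$, while for $i\neq \sigma_t$ choosing
\begin{align*}
\mathbf{s}_{i,t} &= \x_{t+1}-\A_i\x_t-\B_i\uu_t-\f_i-\NU_t,\\
\mathbf{r}_{i,t} &= \yy_t-\C_i\x_t-\D_i\uu_t-\g_i-\ETA_t,
\end{align*}
so that the two equality constraints of \eqref{eqn:firstmilp} hold for every $i$. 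The SOS-1 conditions on $(a_{i,t},\mathbf{s}_{i,t}^j)$ and $(a_{i,t},\mathbf{r}_{i,t}^k)$ are satisfied because whenever the slack may be nonzero we have $a_{i,t}=0$, and whenever $a_{i,t}=1$ the slack is zero by construction. The input, state, and noise admissibility constraints, as well as $\sum_i a_{i,t}=1$, all hold immediately.

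Next I would establish the converse. Given any feasible point of \eqref{eqn:firstmilp}, the binary constraints $a_{i,t}\in\{0,1\}$ with $\sum_i a_{i,t}=1$ pick out a unique active index $\sigma_t \in \mathbb{Z}_m^+$ with $a_{\sigma_t,t}=1$. The SOS-1 constraints then force $\mathbf{s}_{\sigma_t,t}^j=0$ for every $j\in\mathbb{Z}_n^+$ and $\mathbf{r}_{\sigma_t,t}^k=0$ for every $k\in\mathbb{Z}_{n_y}^+$, so the two equality constraints for index $i=\sigma_t$ reduce exactly to the SWA dynamics \eqref{eqn:SWA} for mode $\sigma_t$. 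Combined with $P\x_t\leq p$, $\|\NU_t\|\leq \epsilon_\nu$, $\|\ETA_t\|\leq \epsilon_\eta$ (and the assumed $\uu_t\in\U$), this exhibits $\{\uu_t,\yy_t\}_{t=0}^{N-1}$ as an element of $\mathcal{B}^N_{swa}(\G)$.

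The main conceptual obstacle, and the reason this formulation is cleaner than the earlier one in \cite{Harirchi2015Model}, is verifying that the SOS-1 mechanism reproduces the effect of the usual ``big-M'' disjunctive encoding without needing an explicit bound or a change of variables: when $a_{i,t}=0$ the slacks $\mathbf{s}_{i,t},\mathbf{r}_{i,t}$ absorb whatever residuals mode $i$'s equations would otherwise enforce, and when $a_{i,t}=1$ the slacks vanish componentwise and mode $i$'s dynamics are activated exactly. Once this encoding is understood, both directions reduce to straightforward bookkeeping against Definition~1 and the length-$N$ behavior definition, so I would keep those verifications brief.
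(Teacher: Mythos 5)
Your proof is correct and follows essentially the same reasoning as the paper, which in fact only offers the informal remark that infeasibility means no state and noise values can generate the data; your two-direction argument (constructing slacks for inactive modes, and using the SOS-1 constraints with $\sum_i a_{i,t}=1$ to force the active mode's slacks to vanish so that \eqref{eqn:SWA} holds) is exactly the formalization of that intuition. No gaps beyond trivial bookkeeping (e.g., the implicit final state $\x_N$ and the fact that $\uu_t\in\U$ is given data), so nothing further is needed.
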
 

Intuitively, the infeasibility of \eqref{eqn:firstmilp} indicates that there are no state, input and noise values that can generate the input-output sequence from the model, and hence it is impossible that the data is generated by the model. Proposition \ref{Pro:MIP} enables us to solve the model invalidation problem by checking the feasibility of \eqref{eqn:firstmilp}, which is a MILP with SOS-1 constraints that can be efficiently solved with many off-the-shelf softwares, e.g., \cite{gurobi,cplex}. 

\subsection{$T$-Distinguishability} \label{sec:t-detectable}
The model invalidation problem can be solved for the input-output sequence of any given time horizon to detect  faults, but the number of variables and constraints increase with the size of the time horizon. Thus, a few questions naturally arise with regards to this time horizon. 

First, one may ask if the smallest receding time horizon $T$ can be found, for which two different models are guaranteed to be distinguishable. This question leads us to define the notion of $T$-distinguishability, previously presented in \cite{Harirchi2015Model}\footnote{When the pair of models consists of the nominal system model and the fault model, this is also known as $T$-detectability \cite{Harirchi2015Model,Harirchi2016Guaranteed}.}. $T$-distinguishability is defined for a pair of system and/or fault models, which means that the trajectory generated from the two models cannot be identical for a time horizon of length $T$ for any initial state and any noise signals. This notion is very similar to the concept of input-distinguishability, which is defined for linear time-invariant models in \cite{Lou2009Distinguishability,Rosa2011Distinguishability}. $T$-distinguishability is formally defined as follows: 
\begin{definition}[$T$-distinguishability] \label{def:T_detec} A pair of switched affine models $\G$ and $\bar{\G}$ 
is called \emph{$T$-distinguishable} if $\mathcal{B}^T_{swa}(\G) \cap \mathcal{B}^T_{swa}(\bar{\G}) = \emptyset$, where $T$ is a positive integer. 
\end{definition}  

Thus, given two SWA models and an integer $T$, the $T$-distinguishability problem is to check whether the two models are  $T$-distinguishable or not. This problem can be  addressed using a Satisfiability Modulo Theory approach\cite{Harirchi2015Model}, or a MILP feasibility check \cite{Harirchi2016Guaranteed}. 
As with model invalidation in the previous section, we will also propose an alternative MILP formulation for checking $T$-distinguishability, which employs SOS-1 type constraints and as before, is more intuitive and computationally superior (cf. Section \ref{sec:runtime}). Note that in the following $T$-distinguishability test, we have added a decision variable $\delta$ that will be important in a later discussion, which can be computed with little additional computational cost.

%

\begin{theorem} \label{thm:Tdetect}
	A pair of switched affine models $\G$ and $\bar{\G}$ 
	is $T$-distinguishable 
	if and only if the following 
	is infeasible.
{ \small	\begin{equation} \label{eqn:Tdetectfeas} \tag{P$_{T}$}
\hspace*{-0.2cm}\begin{aligned}
&\bar{\delta} = \min_{\x, \bar{\x}, \uu, \ETA, \bar{\ETA}, \NU, \bar{\NU},\mathbf{s},\mathbf{\bar{s}}, \mathbf{r}, a, \delta}  \delta \\
& \text{s. t. }  \forall t \in \mathbb{Z}_{T-1}^0, \forall i \in \mathbb{Z}_m^+, \; \forall j \in \mathbb{Z}_{\bar{m}}^+, \forall k \in \mathbb{Z}_n^+, \forall l \in \mathbb{Z}_{n_y}^+,  \\
& \quad \quad \forall h \in \mathbb{Z}_{n_p}^+, \bar{h} \in \mathbb{Z}_{n_{\bar{p}}}^+,\\
& \quad \quad \x_{t+1} = \A_i \x_t+\B_i\uu_t + \f_i+ \NU_t+ \mathbf{s}_{i,t}, \\
& \quad \quad \bar{\x}_{t+1} = \bar{\A}_j \bar{\x}_t+ \bar{\B}_j\uu_t + \bar{\f}_j+ \bar{\NU}_t+ \mathbf{\bar{s}}_{j,t}, \\
& \quad \quad \mathbf{P}\x_t \leq \mathbf{p}, \; \mathbf{\bar{P}}\bar{\x}_t \leq \mathbf{\bar{p}},   \\ 
& \quad \quad \C_i\x_t \hspace{-0.05cm} + \hspace{-0.05cm} \D_i \uu_t \hspace{-0.05cm} + \hspace{-0.05cm} \g_i \hspace{-0.05cm}+\hspace{-0.05cm} \ETA_t \hspace{-0.05cm}=\hspace{-0.05cm} \bar{\C}_j\bar{\x}_t \hspace{-0.05cm}+\hspace{-0.05cm} \bar{\D}_j\uu_t \hspace{-0.05cm}+\hspace{-0.05cm} \bar{\g}_j \hspace{-0.05cm}+\hspace{-0.05cm} \bar{\ETA}_t \hspace{-0.05cm} + \hspace{-0.05cm}\mathbf{r}_{i,j,t},  \\
&  \quad \quad a_{i,j,t} \in \{0,1\}, \; \textstyle \sum_{i\in \mathbb{Z}_m^+}\sum_{j\in \mathbb{Z}_{\bar{m}}^+}a_{i,j,t} = 1,\\
& \quad \quad \|\ETA_t \| \leq \epsilon_{\eta}, \; \|\bar{\ETA}_t \| \leq \epsilon_{\bar{\eta}}, \; \|\NU_t \| \leq \epsilon_{\nu}, \; \|\bar{\NU}_t \| \leq \epsilon_{\bar{\nu}},\; \| \uu_t \| \leq U,\\
&\quad \quad (a_{i,j,t},\mathbf{s}_{i,t}^{k}): \text{SOS-1}, \; (a_{i,j,t},\mathbf{\bar{s}}_{j,t}^{k}):\text{SOS-1},\\
&\quad \quad (a_{i,j,t},\mathbf{r}_{i,j,t}^{l}): \text{SOS-1},  \;  \left\| \begin{bmatrix}
\ETA_t\\
\NU_t
\end{bmatrix} - \begin{bmatrix}
\bar{\ETA}_t\\
\bar{\NU}_t
\end{bmatrix} \right\| \leq \delta .
\end{aligned}\hspace*{-0.6cm}
	\end{equation}
 }\normalsize We refer to the above-mentioned problem as $Feas_T(\G,\bar{\G})$.
\end{theorem}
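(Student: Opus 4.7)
The plan is to prove the equivalence by showing that the feasible set of \eqref{eqn:Tdetectfeas} (the objective $\min\delta$ being inessential to feasibility, since $\delta$ is bounded only from above through $\|[\ETA_t^\intercal\; \NU_t^\intercal]^\intercal - [\bar{\ETA}_t^\intercal\; \bar{\NU}_t^\intercal]^\intercal\| \leq \delta$ and can therefore always be made sufficiently large) is non-empty if and only if $\mathcal{B}^T_{swa}(\G) \cap \mathcal{B}^T_{swa}(\bar{\G}) \neq \emptyset$, i.e., the two models are \emph{not} $T$-distinguishable; the objective is retained solely to produce $\bar{\delta}$ as a byproduct usable later as a distinguishability index. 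The driving idea is that the binary variables $a_{i,j,t}$ together with the SOS-1 constraints encode mode selection: at each time $t$, exactly one pair $(\sigma_t,\bar\sigma_t)$ is active, and for that pair the slacks $\mathbf{s}_{\sigma_t,t}$, $\mathbf{\bar{s}}_{\bar\sigma_t,t}$, $\mathbf{r}_{\sigma_t,\bar\sigma_t,t}$ are forced to zero, reducing the equations indexed by $(\sigma_t,\bar\sigma_t)$ to the exact SWA dynamics and output relations of $\G$ and $\bar{\G}$ with a common input and coincident outputs.

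For the ``only if'' direction (common trajectory implies feasibility), I will start from any $\{\uu_t,\yy_t\}_{t=0}^{T-1}$ in both behaviors, which by the definition of $\mathcal{B}^T_{swa}$ yields mode sequences $\sigma_t,\bar\sigma_t$, admissible states $\x_t,\bar{\x}_t$, and noises $\ETA_t,\bar{\ETA}_t,\NU_t,\bar{\NU}_t$ satisfying \eqref{eqn:SWA} for $\G$ and $\bar{\G}$ with the same input. I then construct a feasible point by letting $a_{i,j,t}=1$ iff $(i,j)=(\sigma_t,\bar\sigma_t)$, setting the ``active'' slacks $\mathbf{s}_{\sigma_t,t}, \mathbf{\bar{s}}_{\bar\sigma_t,t}, \mathbf{r}_{\sigma_t,\bar\sigma_t,t}$ to zero, and choosing each remaining slack to absorb the mismatch in its otherwise vacuous equation (e.g.\ $\mathbf{s}_{i,t} := \x_{t+1} - \A_i\x_t - \B_i\uu_t - \f_i - \NU_t$ for $i \neq \sigma_t$, and analogously for $\mathbf{\bar{s}}_{j,t}$ and $\mathbf{r}_{i,j,t}$). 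Taking $\delta$ to be any value no smaller than $\max_t \|[\ETA_t^\intercal\; \NU_t^\intercal]^\intercal - [\bar{\ETA}_t^\intercal\; \bar{\NU}_t^\intercal]^\intercal\|$ closes the feasible point, and every SOS-1 pair is satisfied by construction because each non-zero $a_{i,j,t}$ is paired only with zero slack components.

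For the ``if'' direction (feasibility implies a common trajectory), I will extract from any feasible point the unique pair $(\sigma_t,\bar\sigma_t)$ with $a_{\sigma_t,\bar\sigma_t,t}=1$, which exists by the binary and sum-to-one constraints. The SOS-1 conditions then force $\mathbf{s}_{\sigma_t,t}=0$, $\mathbf{\bar{s}}_{\bar\sigma_t,t}=0$, and $\mathbf{r}_{\sigma_t,\bar\sigma_t,t}=0$, so the equations indexed by $(i,j)=(\sigma_t,\bar\sigma_t)$ collapse to the exact SWA relations of Definition~1 for $\G$ and $\bar{\G}$ with common input $\uu_t$; defining $\yy_t := \C_{\sigma_t}\x_t + \D_{\sigma_t}\uu_t + \g_{\sigma_t} + \ETA_t$, the vanishing output residual forces $\yy_t = \bar{\C}_{\bar\sigma_t}\bar{\x}_t + \bar{\D}_{\bar\sigma_t}\uu_t + \bar{\g}_{\bar\sigma_t} + \bar{\ETA}_t$, so $\{\uu_t,\yy_t\}_{t=0}^{T-1}$ lies in both behaviors. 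The main obstacle will be carefully tracking the SOS-1 bookkeeping: the slacks $\mathbf{s}_{i,t}$ and $\mathbf{\bar{s}}_{j,t}$ are indexed by a single mode while $a_{i,j,t}$ is indexed by a mode pair, so one must verify that the SOS-1 conditions quantified over the companion index correctly force these slacks to zero exactly when their associated mode is active, and that freely choosing slacks for inactive modes does not conflict with the admissibility constraints on $\x_t,\bar{\x}_t$ (which is the case, since those constraints involve only the states).
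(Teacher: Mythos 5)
Your proof is correct, but it takes a different route from the paper. The paper's own proof is essentially a one-liner by reduction: it observes that, apart from the final constraint $\left\| [\ETA_t^\intercal\;\NU_t^\intercal]^\intercal - [\bar{\ETA}_t^\intercal\;\bar{\NU}_t^\intercal]^\intercal \right\| \leq \delta$, problem \eqref{eqn:Tdetectfeas} is an equivalent reformulation of the MILP feasibility test already proven in an earlier paper (Theorem 1 of the cited reference), and that this last constraint cannot change the feasible set because $\delta$ is a free decision variable that can always be taken large enough. You instead prove the equivalence from first principles: you show directly that feasibility of \eqref{eqn:Tdetectfeas} is equivalent to $\mathcal{B}^T_{swa}(\G)\cap\mathcal{B}^T_{swa}(\bar{\G})\neq\emptyset$, constructing a feasible point from a common input-output trajectory (active slacks zero, inactive slacks absorbing the vacuous equations, $\delta$ chosen large) and, conversely, extracting the mode pair $(\sigma_t,\bar\sigma_t)$ from the binaries and using the SOS-1 conditions to collapse the active equations to the exact dynamics of both models with a common $\uu_t$ and coincident outputs. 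Your version buys self-containedness and an explicit verification that the SOS-1 encoding (in particular the asymmetric indexing of $\mathbf{s}_{i,t}$, $\mathbf{\bar{s}}_{j,t}$ versus $a_{i,j,t}$) really does implement mode selection, which the paper delegates to the prior formulation; the paper's proof buys brevity at the cost of relying on that external equivalence. One cosmetic slip: you say $\delta$ is ``bounded only from above'' by the norm constraint, whereas that constraint lower-bounds $\delta$; your conclusion that $\delta$ can always be chosen sufficiently large (so the constraint never affects feasibility, exactly the paper's observation) is nevertheless right.
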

\begin{proof}
Except for the last constraint, this is an equivalent formulation to the MILP feasibility problem of Theorem 1 in \cite{Harirchi2016Guaranteed}. Clearly, the last constraint does not change the feasible set, therefore the feasibility of \eqref{eqn:Tdetectfeas} is necessary and sufficient for $T$-distinguishability.
\end{proof}

While Theorem \ref{thm:Tdetect} enables us to solve the $T$-distinguishability problem, if the two models are not $T$-distinguishable, i.e., the solution to \ref{eqn:Tdetectfeas} is feasible, it additionally delivers $\bar{\delta}$, which we argue is a good indication and measure for the separability of two models. In essence, $\bar{\delta}$ can be interpreted as the noise effort that is required to make the trajectories of the two models identical. A larger value for $\bar{\delta}$ indicates a larger separation between the two models that the noise has to compensate for. Hence, we will refer to the normalized version of $\bar{\delta}$ as the \emph{distinguishability index} 
\begin{align}
\delta^*=\frac{\bar{\delta}}{\delta^{\max}},
\end{align}
where $\delta^{\max}\doteq \min\{\max\{\epsilon_{\eta}+\epsilon_{\bar{\eta}},\epsilon_{\nu}+\epsilon_{\bar{\nu}}\},\max\{\epsilon_\eta,\epsilon_\nu\}+\max\{\epsilon_{\bar{\eta}},\epsilon_{\bar{\nu}}\}\}$ is an upper bound on $\bar{\delta}$;  hence, $0 \leq \delta^* \leq 1$. 

Moreover, to find the smallest $T$ for which we have $T$-distinguishability, one could  iterate with $T$ increasing from 1 until the $T$-distinguishability problem in Theorem \ref{thm:Tdetect} becomes infeasible. But, if $\delta^*$ is small for some $T$, then it may make sense to consider larger increases in $T$ to speed up computation. Thus, $\delta^*$ can be used as a heuristic for choosing the next $T$ to solve the $T$-distinguishability problem. In addition, one may also ask about when the iterations with increasing $T$ can be terminated with some confidence that a finite $T$ does not exist. 
Once again, we can consider the trend of $\delta^*$ with increasing $T$ and terminate the iterations when $\delta^*$ reaches a plateau. This will be demonstrated to be effective in a simulation example in Section \ref{sec:distinguishability}. In addition, when this index reaches a (non-zero) plateau and the problem remains not $T$-distinguishable, 
then it would be possible to use any value that is smaller than the maximum $\delta^*$ to derive the maximum allowed uncertainty for a system such that fault detection is guaranteed. This may suggest possible design remedies involving the choice of sensors with better precision or the employment of noise isolation platforms to reduce the amount of noise, in order to facilitate fault detection.

\section{Fault Isolation} \label{sec:fault}
In practice, it is of special interest to identify the source of a detected fault, as it significantly simplify the process of fault accommodation. The process of distinguishing among possible fault models is called fault isolation. In this section, we utilize the tools we developed for distinguishability and model invalidation to address the fault isolation problem for switched affine models, i.e, to  derive necessary and sufficient conditions to guarantee the isolation of a set of possible faults. In addition, we propose a tractable way to isolate faults in real-time for many applications by leveraging the recent advances in the development of mixed-integer linear programming tools \cite{cplex}. In order to ensure the isolability of two fault models, it suffices that the two fault models are $T$-distinguishable. In this case, by implementing model invalidation for each of the two models on a time horizon of length $T$, we can simply isolate the faults after detection. Hence, we turn our attention to the multiple-fault scenario.

\subsection{Multiple-Fault Scenario}
The fault isolation problem becomes marginally more challenging 
with multiple fault models. Let us assume that there exist $N_f$ fault models for a specific system. We consider the following necessary and sufficient assumptions:
\begin{assumption} 
[Detectability Assumption] \label{a:detectable} We assume that $\forall \, j \in \mathbb{Z}_{N_f}^+$, there exists a finite $T_j$ such that the pair of nominal system $\G$ and the fault model $\bar{\G}_j$ is $T_j$-distinguishable. 
\end{assumption}
\begin{assumption}
[Isolability Assumption] \label{a:isolable} We assume that $\forall \, m,n\in \mathbb{Z}_{N_f}^+$, $m \neq n$, there exists a finite $I_{m,n}$ such that $\bar{\G}_m$ and $\bar{\G}_n$ are $I_{m,n}$-distinguishable.
\end{assumption}

Now, we will define $I$-isolability for multiple faults.
\begin{proposition}
	[$I$-isolability for multiple faults] Consider a set of $N_f$ fault models that satisfies Assumption \ref{a:isolable}. If a fault occurs, it can be isolated in at most $I = \max_{m,n, \; m\neq n} I_{m,n}$ steps after the occurrence. Such a set of fault models is called $I$-isolable. 
\end{proposition}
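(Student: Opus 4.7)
The plan is to show that applying the model invalidation test of Proposition 1 to each candidate fault model on a length-$I$ window beginning at the fault occurrence invalidates every candidate except the true one. Suppose the fault that actually occurs is of type $m \in \mathbb{Z}_{N_f}^+$, so that the observed input-output sequence $\{\uu_t,\yy_t\}_{t=t_0}^{t_0+I-1}$ starting at the fault occurrence time $t_0$ belongs to $\mathcal{B}^I_{swa}(\bar{\G}_m)$ by construction. The key auxiliary fact I would establish first is a monotonicity property of $T$-distinguishability: if a pair $(\G_1,\G_2)$ is $T$-distinguishable, then it is also $T'$-distinguishable for every $T' \geq T$. This is immediate from Definition 3 and the observation that truncating any input-output-state-mode-noise trajectory of length $T'$ to its first $T$ samples yields a valid length-$T$ trajectory of the same model; hence a common length-$T'$ trajectory would produce a common length-$T$ trajectory, contradicting $T$-distinguishability.

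With this monotonicity property in hand, I would invoke Assumption 2: for every $n \neq m$ there is a finite $I_{m,n}$ such that $\bar{\G}_m$ and $\bar{\G}_n$ are $I_{m,n}$-distinguishable. Since $I = \max_{m,n:\,m\neq n} I_{m,n} \geq I_{m,n}$, the monotonicity lemma upgrades this to $I$-distinguishability, i.e., $\mathcal{B}^I_{swa}(\bar{\G}_m) \cap \mathcal{B}^I_{swa}(\bar{\G}_n) = \emptyset$ for every $n \neq m$. Because the observed length-$I$ sequence lies in $\mathcal{B}^I_{swa}(\bar{\G}_m)$, it cannot lie in $\mathcal{B}^I_{swa}(\bar{\G}_n)$, and therefore $Feas(\{\uu_t,\yy_t\}_{t=t_0}^{t_0+I-1},\bar{\G}_n)$ is infeasible by Proposition 1. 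Every fault model other than $\bar{\G}_m$ is thus invalidated within $I$ steps while $\bar{\G}_m$ is not, so the fault is uniquely identified, proving the claim.

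The argument is largely bookkeeping, and the only genuinely new step is the monotonicity lemma for $T$-distinguishability, which is a routine consequence of the definition of the behavior set. A subtler issue that the proposition statement sidesteps is that in an on-line implementation the occurrence time $t_0$ is not known a priori and the isolation procedure must be run on a sliding window; however, the delay bound $I$ stated in the proposition refers specifically to the window that starts at the occurrence, which is exactly what the argument above handles. I expect this to be the only point worth flagging, since the rest of the reasoning reduces to combining Assumption 2, the monotonicity observation, and Proposition 1.
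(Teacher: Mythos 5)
Your argument is correct and follows essentially the same route as the paper's own proof: use Assumption A2 together with $I \geq I_{m,n}$ to conclude that all pairs of fault models are $I$-distinguishable, so the length-$I$ behavior of the occurred (persistent) fault intersects no other fault's behavior and every other model is invalidated within $I$ samples. The only difference is that you spell out the monotonicity of $T$-distinguishability in $T$ (via truncation of trajectories), a step the paper uses implicitly, which is a harmless and correct addition.
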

\begin{proof}
Under Assumption \textit{A}\ref{a:isolable}, and because $I \geq I_{m,n}$ for all possible pairs of fault models, all pairs of faults are $I$-distinguishable. Therefore, if any of the faults occur persistently, by observing at most $I$ samples, it will be isolated. This is because the length-$I$ behavior of the occurred fault does not have any intersection with the length-$I$ behavior of any of the other faults.
\end{proof}

\section{FDI Scheme} \label{sec:FDI}
In this section, we propose a two-step FDI scheme: \\[-2.pt]
\subsubsection{Off-line step} In the off-line step, under Assumptions $A$\ref{a:detectable} and $A$\ref{a:isolable}, we calculate the following quantities:
	{\small\begin{equation*}
		\begin{aligned}
			& \text{Isolability index: } I = \max_{m,n} I_{m,n}, \; m,n \in \mathbb{Z}_{N_f}^+, \; m \neq n,\\
			& \text{Isolability index for fault $i$: } \tilde{I}_i = \textstyle \max_{j\in \mathbb{Z}^+_{N_f},\; j \neq i} I_{i,j},\\
			& \text{Detectability index: } T = \textstyle \max_{j\in \mathbb{Z}^+_{N_f}} T_j.\\
			& \text{Length of memory: } K = \max \{T,I\}
		\end{aligned}
	\end{equation*}}
\vspace{-0.35cm}
\subsubsection{On-line step} In this step, we leverage $N_f+1$ \emph{parallel} monitors corresponding to system and fault models.  The monitors are labeled as $\{\mathcal{M}_0, \mathcal{M}_1, \hdots, \mathcal{M}_{N_f} \}$, where $\mathcal{M}_0$ corresponds to the system model and $\mathcal{M}_i$ corresponds to the $i^{th}$ fault model. First, only $\mathcal{M}_0$ is active for fault detection. The rest of the monitors will be ``off'' until a fault is detected by $\mathcal{M}_0$. The inputs to each monitor at time $t$ are the input-output sequence of length $K_i=\max\{\tilde{I}_i,T_i\}$, $\{\uu_k,\yy_k \}_{k = t-K_i+1}^{t}$, and the corresponding model $\bar{G}_i$. For instance, $\mathcal{M}_0$ knows $\G$, and at each time step, it solves the model invalidation problem, $Feas(\{\uu_k,\yy_k \}_{k = t-T+1}^{t}, \G)$. If the problem is feasible, the monitor outputs $0$, otherwise it outputs $1$. In the latter case, the bank of fault monitors is activated and \emph{parallelly} solves the model invalidation problems for all fault models, i.e., to check if $\mathcal{M}_j$ solves $Feas(\{\uu_k,\yy_k \}_{k = t-K_j+1}^{t}, \bar{\G}_j)$ for each $j \in \mathbb{Z}_{N_f}^+$. By Assumptions $A$\ref{a:detectable} and $A$\ref{a:isolable}, it is guaranteed that in this case, the problem of at most one monitor is feasible. The output block receives the signal from all the monitors and shows two elements, the first element is 1, which indicates that a fault has occurred, and the second element is $k_f \in \mathbb{Z}_{N_f}^+$ if the fault matches $k_f$$^{th}$ fault model, or 0 if the fault does not match any of the fault models.  


At every time step $t$, this FDI scheme acts as a function:
\begin{equation}
[\mathcal{H}, \mathcal{F}] = \psi(\{ \uu_k, \yy_k\}_{k= t-K+1}^t, \G, \{ \bar{\G}_j\}_{j=1}^{N_f}),
\end{equation}
where $\mathcal{H}$ is 
0 or 1 to indicate healthy or faulty behaviors, and $\mathcal{F}$ either indicates the fault model that is active, or that none of the fault models caused the faulty behavior. 
\begin{remark}
	In some practical examples, Assumptions $A$\ref{a:detectable} and $A$\ref{a:isolable} may not be satisfied, i.e., the FDI approach is not guaranteed to detect and isolate the given fault models. However, the FDI approach can be simply modified such that $\mathcal{F}$ outputs either the set of faults that matches the data (because some fault models may not be isolable) along with their corresponding `likelihoods' in terms of their distinguishability indices, or the empty set, if none of the models matches the data. 
\end{remark}

\subsection{Detection and Isolation Delays}
In this section, we describe the notion of delays in detection and isolation of faults, and provide theoretical bounds on these delays using detectability and isolability indices.
\begin{definition}
	(Detection/Isolation Delay) Detection/isolation delay is the number of time samples it takes from the occurrence of the fault to its detection/isolation. We denote detection and isolation delays with $\tau_{T}$ and  $\tau_{I}$, respectively. 
\end{definition} 
\begin{proposition}
	Given a $T_i$-distinguishable pair of system and fault models $(\G,\bar{\G}_i)$, the detection delay of the proposed fault detection scheme is bounded by $T_i$. In addition, the isolation delay of a pair of $I_{i,j}$-isolable fault models $(\bar{G}_i,\bar{G}_j)$ is bounded by $I_{i,j}$.
\end{proposition}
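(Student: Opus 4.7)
The plan is to argue that once each monitor's sliding window accumulates enough post-fault samples, the associated model invalidation problem must be infeasible by virtue of the distinguishability hypothesis. Let $t_f$ denote the time step at which fault $i$ occurs, so that for $k \geq t_f$ the data $\{\uu_k,\yy_k\}$ is generated by $\bar{\G}_i$; in particular, $\{\uu_k,\yy_k\}_{k=t_f}^{t_f+T_i-1}\in \mathcal{B}^{T_i}_{swa}(\bar{\G}_i)$ and $\{\uu_k,\yy_k\}_{k=t_f}^{t_f+I_{i,j}-1}\in \mathcal{B}^{I_{i,j}}_{swa}(\bar{\G}_i)$, by construction.

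For the detection bound, I would examine the monitor $\mathcal{M}_0$ at time $t^\star = t_f + T_i - 1$. By $T_i$-distinguishability of $(\G,\bar{\G}_i)$, the length-$T_i$ post-fault sub-sequence above does not lie in $\mathcal{B}^{T_i}_{swa}(\G)$. To transfer this infeasibility to the length-$T$ window actually processed by $\mathcal{M}_0$ (with $T\geq T_i$), I would invoke a simple projection lemma: if $\{\uu_k,\yy_k\}_{k=t^\star-T+1}^{t^\star}\in\mathcal{B}^{T}_{swa}(\G)$ were witnessed by some state/mode/noise trajectory, then its restriction to the contiguous sub-interval $[t_f,t^\star]$ would satisfy the same dynamics and admissibility constraints and would place the sub-sequence in $\mathcal{B}^{T_i}_{swa}(\G)$, contradicting the previous step. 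Hence $Feas(\{\uu_k,\yy_k\}_{k=t^\star-T+1}^{t^\star},\G)$ is infeasible and $\mathcal{M}_0$ flags the fault no later than $t^\star$, yielding $\tau_T \leq T_i$. The isolation bound follows by replaying the argument with $\G$ replaced by $\bar{\G}_j$ and $T_i$ replaced by $I_{i,j}$: once the fault bank is active, $\mathcal{M}_j$ processes a window of length $K_j \geq \tilde{I}_j \geq I_{i,j}$, and $I_{i,j}$-distinguishability together with the projection lemma invalidates $\bar{\G}_j$ no later than $t_f + I_{i,j} - 1$, so $\tau_I \leq I_{i,j}$.

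The main obstacle I anticipate is articulating the projection step cleanly, since it is the only bridge between the distinguishability hypotheses, stated at horizons $T_i$ and $I_{i,j}$, and the monitor windows actually used on-line, of lengths $T \geq T_i$ and $K_j \geq I_{i,j}$. The step itself is immediate from the definition of $\mathcal{B}^{N}_{swa}(\cdot)$ (drop the constraints outside the sub-interval while keeping the same state, mode, and noise values inside), but it must be stated explicitly, because without it the monitors' infeasibility does not follow directly from the pairwise distinguishability certificates. A secondary, purely bookkeeping issue is the off-by-one in the delay count, depending on whether $t_f$ itself is treated as a post-fault sample; this only shifts the bound by one and does not affect the qualitative conclusions $\tau_T \leq T_i$ and $\tau_I \leq I_{i,j}$.
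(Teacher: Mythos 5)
Your proposal is correct, and it is essentially the paper's approach spelled out in full: the paper disposes of this proposition with the one-line proof ``direct consequence of definitions,'' which is exactly the content of your argument that a length-$T_i$ (resp.\ length-$I_{i,j}$) post-fault window lies in $\mathcal{B}^{T_i}_{swa}(\bar{\G}_i)$ and hence, by distinguishability, cannot be explained by $\G$ (resp.\ $\bar{\G}_j$). Your additional projection/restriction step for windows of length $T \geq T_i$ is the same device the paper uses in its proof of the subsequent theorem bounding the delays of the full FDI scheme (where the isolation delay becomes $K_i=\max\{\tilde{I}_i,T_i\}$ precisely because detection must occur first, a point your last paragraph correctly flags); so no gap, only more detail than the paper records for this particular statement.
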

\begin{proof}
Direct consequence of definitions.	
\end{proof}
\begin{theorem}
	The detection delay for fault $\bar{\G}_i$ using the FDI scheme proposed in Section \ref{sec:FDI} is bounded by $T_i$, and the isolation delay is bounded by $K_i=\max\{\tilde{I}_i,T_i\}$.
\end{theorem}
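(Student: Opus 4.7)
My plan is to split the statement into its two bounds and derive each using the relevant distinguishability assumption plus a single monotonicity observation about behavior sets: if $\{\uu_t,\yy_t\}_{t=0}^{N-1}\in\mathcal{B}^N_{swa}(\G)$ with witnesses $(\x_t,\sigma_t,\ETA_t,\NU_t)$, then for any $0\leq s\leq s+N'\leq N$ the restricted witnesses certify $\{\uu_t,\yy_t\}_{t=s}^{s+N'-1}\in\mathcal{B}^{N'}_{swa}(\G)$. Contrapositively, if some length-$N'$ sub-window fails to lie in $\mathcal{B}^{N'}_{swa}(\G)$, then the enclosing length-$N$ window fails to lie in $\mathcal{B}^N_{swa}(\G)$. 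I would state this one-line observation at the outset and then invoke it in both bounds.

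First, for the detection-delay bound, I would let fault $\bar{\G}_i$ become active at some time $t_f$, so the post-fault output $\{\uu_k,\yy_k\}_{k=t_f}^{t_f+T_i-1}$ is realizable by $\bar{\G}_i$ and hence lies in $\mathcal{B}^{T_i}_{swa}(\bar{\G}_i)$. Assumption $A$\ref{a:detectable} with Theorem \ref{thm:Tdetect} gives $\mathcal{B}^{T_i}_{swa}(\G)\cap\mathcal{B}^{T_i}_{swa}(\bar{\G}_i)=\emptyset$, so that same sub-trajectory is not in $\mathcal{B}^{T_i}_{swa}(\G)$. Evaluating $\mathcal{M}_0$ at $t=t_f+T_i-1$, its length-$T$ window (with $T\geq T_i$) contains this invalidating sub-trajectory, and the monotonicity observation lifts the non-membership to the whole window, so $Feas(\{\uu_k,\yy_k\}_{k=t-T+1}^{t},\G)$ is infeasible and $\mathcal{M}_0$ flags a fault, yielding $\tau_T\leq T_i$.

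Next, for the isolation-delay bound, I would note that once detection fires the parallel bank $\{\mathcal{M}_j\}_{j=1}^{N_f}$ is activated; the correct monitor $\mathcal{M}_i$ stays feasible for every $t\geq t_f$ since the measured data is by construction realizable by $\bar{\G}_i$, so only the time at which every incorrect monitor $\mathcal{M}_j$ ($j\neq i$) is invalidated needs to be bounded. Fixing such a $j$, Assumption $A$\ref{a:isolable} supplies $I_{i,j}$-distinguishability of $(\bar{\G}_i,\bar{\G}_j)$; the window length $K_j=\max\{\tilde{I}_j,T_j\}\geq\tilde{I}_j\geq I_{j,i}=I_{i,j}$ is long enough that at $t=t_f+I_{i,j}-1$ the window of $\mathcal{M}_j$ contains the full length-$I_{i,j}$ post-fault sub-trajectory, which is not in $\mathcal{B}^{I_{i,j}}_{swa}(\bar{\G}_j)$; monotonicity then invalidates the entire length-$K_j$ window. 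Taking the worst $j\neq i$ bounds this at $t_f+\tilde{I}_i-1$, and combining with the detection prerequisite $T_i$ from the first part yields isolation by $t_f+\max\{\tilde{I}_i,T_i\}-1$, i.e., $\tau_I\leq K_i$.

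The step I expect to require the most care is a clean statement of the monotonicity observation, because the fault induces a mode switch at $t_f$ and any monitor window straddling $t_f$ mixes pre-fault and post-fault data, so I want to spell out explicitly that it is only the post-fault sub-window that must fall outside the nominal or wrong-fault behavior set, not the full mixed window. Everything else is bookkeeping on the indices $T_i$, $\tilde{I}_i$, $K_i$ and on the symmetry $I_{j,i}=I_{i,j}$ of pairwise distinguishability.
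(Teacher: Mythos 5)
Your proposal is correct and follows essentially the same route as the paper's proof: detection is obtained by noting that at time $t_f+T_i-1$ the monitor window contains a length-$T_i$ post-fault sub-trajectory in $\mathcal{B}^{T_i}_{swa}(\bar{\G}_i)$, which by $T_i$-distinguishability invalidates $\G$, and isolation follows from $\tilde{I}_i\geq I_{i,j}$ invalidating every wrong fault model, combined with the detection prerequisite to give $K_i=\max\{\tilde{I}_i,T_i\}$; your explicit sub-window monotonicity lemma is simply the step the paper uses implicitly. One minor caution: the claim that $\mathcal{M}_i$ stays feasible for \emph{every} $t\geq t_f$ is an overstatement (its window may straddle $t_f$ and mix nominal data), but this is not needed, since at $t_f+K_i-1$ its window is entirely post-fault and the bound goes through unchanged.
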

\begin{proof}
Assume fault $i$ occurs at time $t^*$. The FDI approach implements model invalidation with a time horizon size of $T\geq T_i$. At time $t^*+T_i-1$, the input-output trajectory that is fed to the model invalidation contains a length $T_i$ trajectory that is in $\mathcal{B}^{T_i}(\bar{\G}_i)$. By $T_i$-distinguishability of $\bar{\G}_i$, this trajectory cannot be generated by $\G$. Thus, the model will be invalidated by observing at most $T_i$ data points from fault $i$. This concludes the proof for the bound on detection delay. For isolation, the FDI approach requires detection first, and in the worst case, detection will occur in $T_i$ steps. On the other hand, if we observe any trajectory from $t^*$ to $t^*+\tilde{I}_i-1$ that is generated by fault $i$, it is not in $\mathcal{B}^{\tilde{I}_i}(\bar{\G}_j), j\neq i$. This is because $\tilde{I}_i \geq I_{i,j}, j\neq i$. Hence, the fault will be isolated in at most $\tilde{I}_i$ observations of the fault. Considering that the fault needs to be detected first, the isolation delay is bounded by $K_i=\max\{\tilde{I}_i,T_i\}$. This concludes the proof.  
\end{proof} 

\subsection{Adaptive Fault Isolation}
The bound on 
isolation delays represents the worst case scenario, where the data created by a fault model falls 
within the behavior of some other models up until the very last time step. However, this is not the case in most applications, where the faults can be 
isolated much prior to this bound. Here, in this section, we propose an adaptive fault isolation scheme that reduces isolation delay, which is based on the idea of validation of only one of the fault models. 
Since the data prior to the time of detection will most probably invalidate all the fault models, we propose 
to reduce isolation delays by using an adaptive receding horizon that considers only the data starting from the detection time (fixed horizon lower bound) with increasing horizon until only one fault model matches or validates the data. In practice, we can achieve this by considering model invalidation problems for each of the fault models with the adaptive receding horizon until only one fault model remains that matches the data. 

Since we assumed that the fault is among the predefined set of models and is persistent, it is guaranteed that the fault will be isolated with this approach. Such an approach has the potential to significantly reduce isolation delays, as we have observed in simulation in Section \ref{sec:HVAC} (cf. Fig. \ref{fig:results} (bottom)).

\section{Illustrative Examples} \label{sec:example}

First, we demonstrate in Section \ref{sec:runtime} that our new formulations for model invalidation and $T$-distinguishability in Prop. \ref{prop1} and Thm. \ref{thm:Tdetect}, respectively, are computationally superior to the previous formulation in  \cite{Harirchi2015Model,Harirchi2016Guaranteed}.
Then, we illustrate the performance of the proposed FDI scheme using a numerical model for the Heating, Ventilating, and Air Conditioning (HVAC) system that is proposed in \cite{Arguello1999Nonlinear} in Section \ref{sec:HVAC}. Moreover, we provide a numerical example in Section \ref{sec:distinguishability} to illustrate the practical merits of the distinguishability index that was introduced in Section \ref{sec:t-detectable}. All the simulations in this section are implemented on a 3.5 GHz machine with 32 GB of memory that runs Ubuntu. For the implementation of the MILP feasibility check problems, we utilized YALMIP \cite{YALMIP} and Gurobi \cite{gurobi}. All the approaches and examples are implemented in MATLAB.

\subsection{Run-Time Comparison} \label{sec:runtime}
In this section, we compare the run-time for the formulations proposed in this paper with the one 
in \cite{Harirchi2016Guaranteed}. Consider a hidden-mode switched affine model, 
$\G$, with admissible sets $\X = \{ \x \mid\| \x \| \leq 11 \}$, $\U = \{ \uu \mid\| 
\uu \| \leq 1000 \}$ and $\E = \{ \ETA \mid\| \ETA \| \leq 0.1 \}$. We assume there is no process noise. We also assume 
$B =[1 \;\; 0 \;\; 1]^\intercal$ and $C = [1 \;\; 1 \;\; 1]$ for all modes. The system matrices of the modes are:

\vspace{-0cm}
{\scriptsize 
	\begin{align*} 
	& \A_1 \hspace{-0.05cm}=\hspace{-0.05cm} \begin{bmatrix}
	0.5 & 0.5 & 0.5\\
	0.1 & -0.2 & 0.5\\
	-0.4 & 0.6 & 0.2
	\end{bmatrix}\hspace{-0.1cm}, \f_1 \hspace{-0.05cm}=\hspace{-0.05cm} \begin{bmatrix}
	1 \\ 0 \\ 0
	\end{bmatrix}\hspace{-0.1cm},  \A_2 \hspace{-0.05cm}=\hspace{-0.05cm} \begin{bmatrix}
	0.5 & 0.5 & 0.5\\
	-0.3 & -0.2 & 0.3 \\
	0.1 & -0.3 & -0.5
	\end{bmatrix}\hspace{-0.1cm},  \f_2 \hspace{-0.05cm}=\hspace{-0.05cm} \begin{bmatrix}
	0 \\ 1 \\ 0
	\end{bmatrix}\hspace{-0.1cm}, \nonumber \\
	& \A_3 \hspace{-0.05cm}=\hspace{-0.05cm} \begin{bmatrix}
	0.5 & 0.2 & 0.6\\
	0.2 & -0.2 & 0.2\\
	-0.9 & 0.7 & 0.1
	\end{bmatrix}\hspace{-0.1cm},  \f_3 \hspace{-0.05cm}=\hspace{-0.05cm} \begin{bmatrix}
	0 \\ 0 \\ 1
	\end{bmatrix}\hspace{-0.1cm}.
	\end{align*} 
}

In addition, consider a fault model, $\G^f$, with: 

{\scriptsize
	\begin{align*} \label{eqn:numexfault}
	& \A^f = \begin{bmatrix}
	0.8 & 0.7 & 0.6\\
	0.1 & -0.2 & 0.3\\
	-0.4 & 0.3 & -0.2
	\end{bmatrix}\hspace{-0.05cm}, \ \B^f = \begin{bmatrix}
	1 \\ 0 \\ 0
	\end{bmatrix}\hspace{-0.05cm}, \ \f^f = \begin{bmatrix}
	1 \\ 1 \\ 1
	\end{bmatrix}.
	\end{align*}}
	
The implementation of the $T$-distinguishability approach proves that the system and fault model pairs is 12-distinguishable. We first randomly generate input-output trajectories (5 for each time horizon length) from $\G^f$. 
We then compare the model invalidation approaches 
that use the proposed formulation in Prop. \ref{prop1} and the one in \cite{Harirchi2015Model,Harirchi2016Guaranteed}. The average run-time for each time horizon length as well as the standard deviation of run-times for both formulations are illustrated in Fig. \ref{fig:run_time}. Clearly, the results indicate the superiority of the proposed formulation to the one in \cite{Harirchi2015Model,Harirchi2016Guaranteed}. Similar improvements were also observed for the proposed $T$-distinguishability formulation in Thm. \ref{thm:Tdetect} when compared to \cite{Harirchi2015Model,Harirchi2016Guaranteed} (plots are omitted for brevity).

\begin{figure}[h!] 
	\centering
	\includegraphics[width=2.1in,trim=0mm 5mm 0mm 0mm]{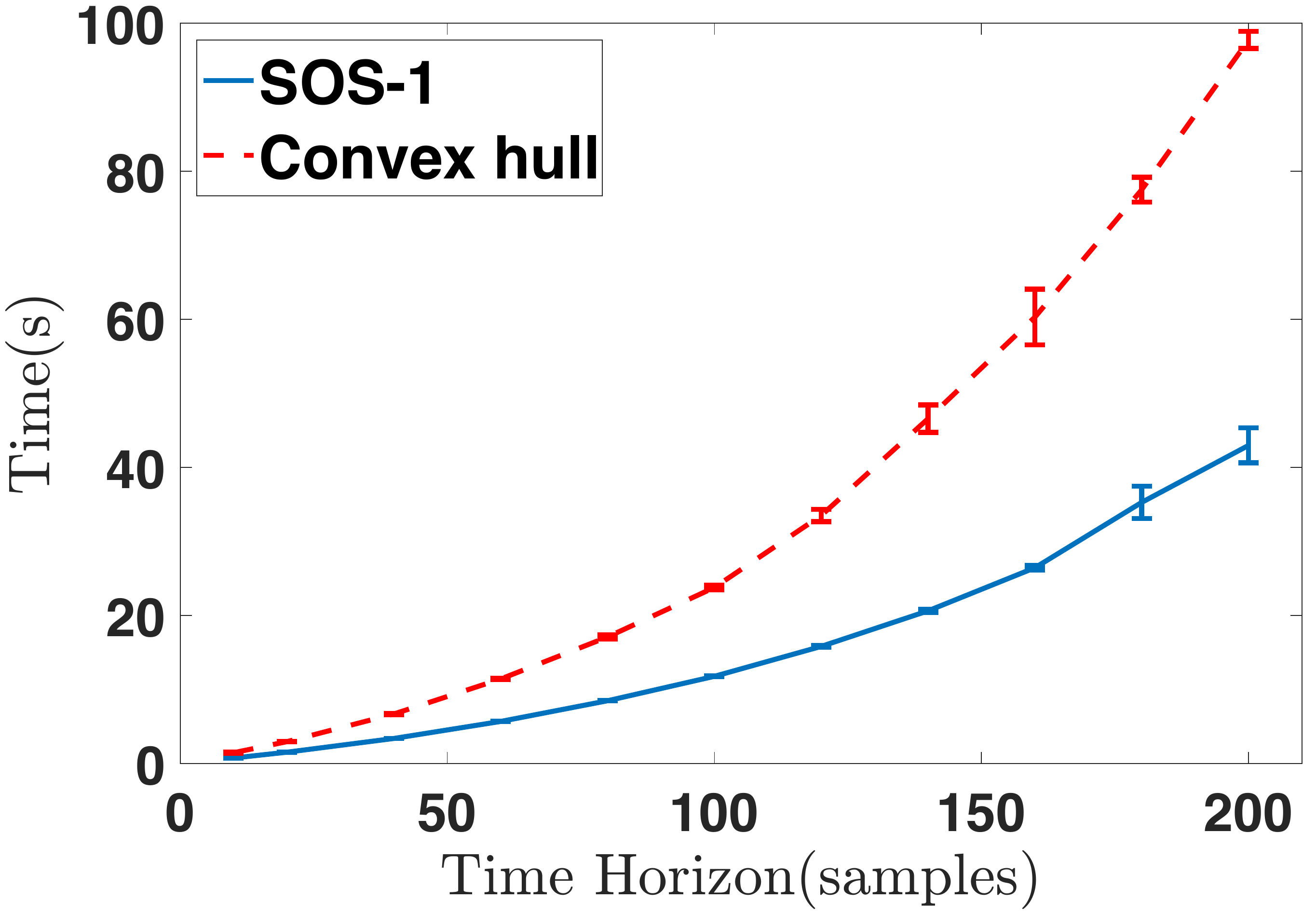}
	\caption{Average execution time (with standard deviations) for invalidating data generated by $\G^f$ 
	with various time horizons.} \vspace{-0.1cm}
	\label{fig:run_time}
\end{figure}

 \vspace{-0.2cm}
\subsection{Fault Diagnosis in HVAC Systems}\label{sec:HVAC}
\begin{figure}[h!]
	\centering
	\includegraphics[width=1.7in]{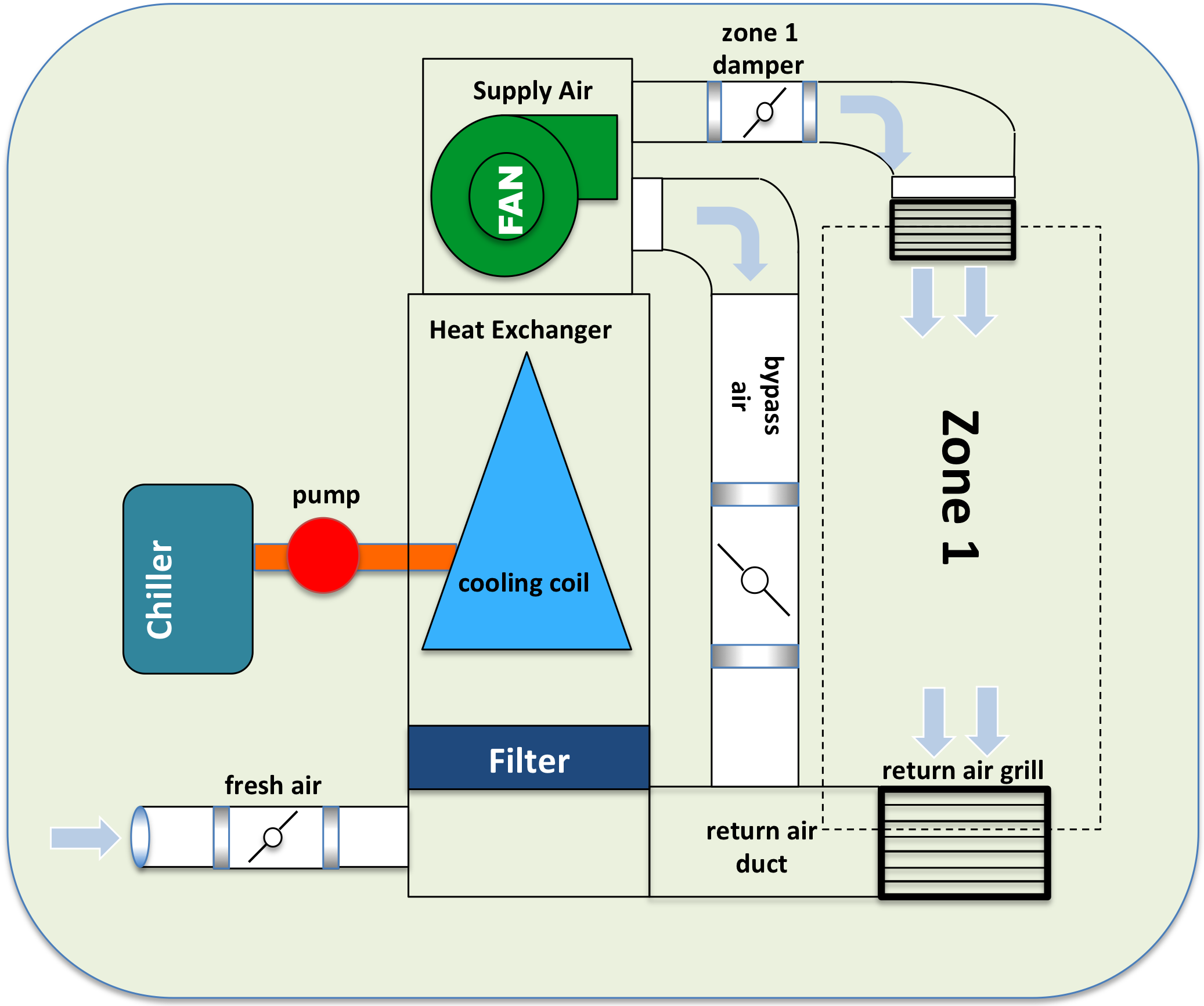}
	\caption{Schematic of a single-zone HVAC system.}  \vspace{-0.1cm}\label{fig:hvacscheme}
\end{figure}

%
%

\begin{figure*}[t!]
\begin{center}
	\begin{minipage}[right]{2.1in}
		\includegraphics[width=2.1in,trim=0mm 12mm 0mm 0mm]{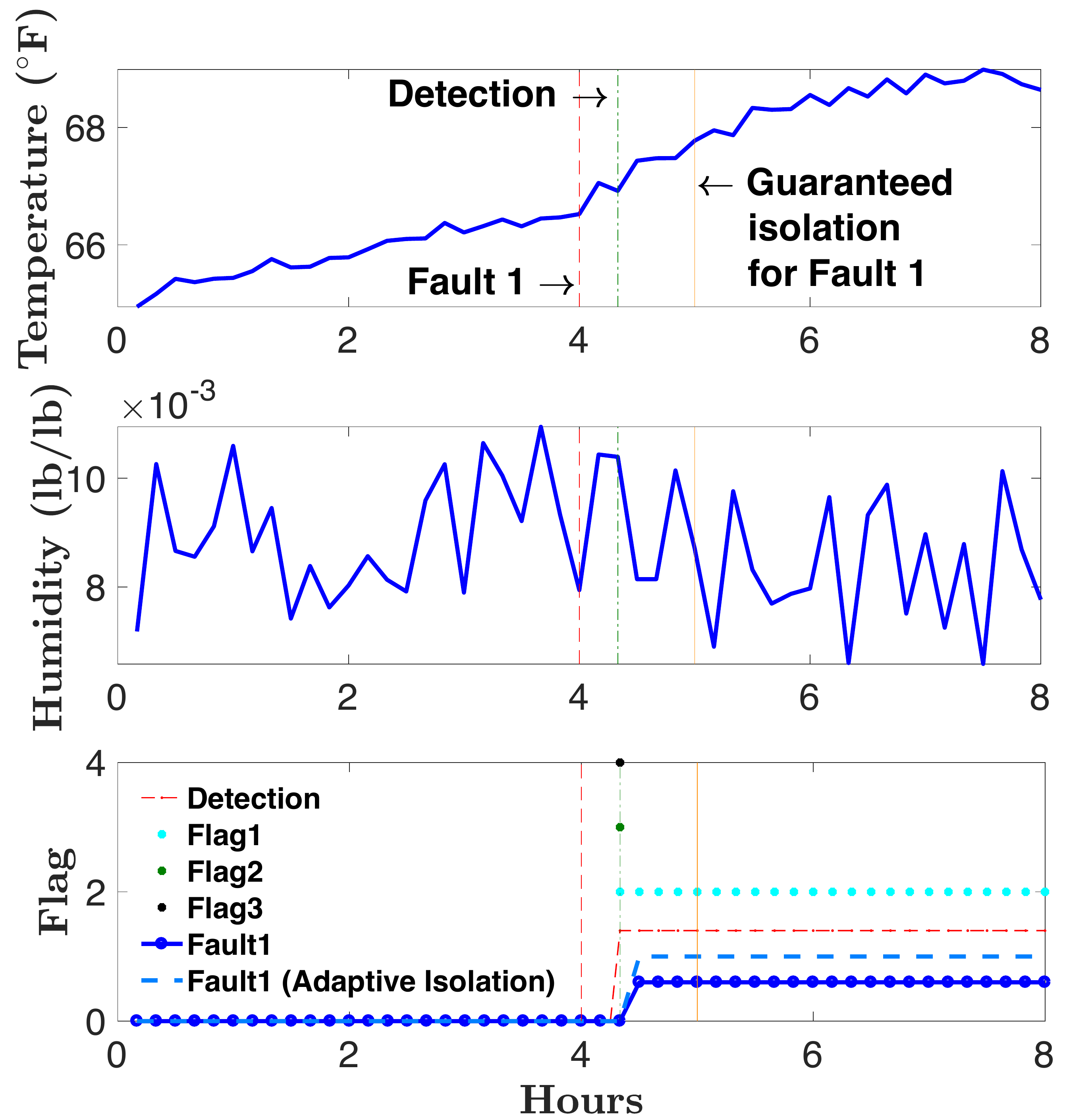}
	\end{minipage} \quad \
	\begin{minipage}[right]{2.1in}
		\includegraphics[width=2.1in,trim=0mm 12mm 0mm 0mm]{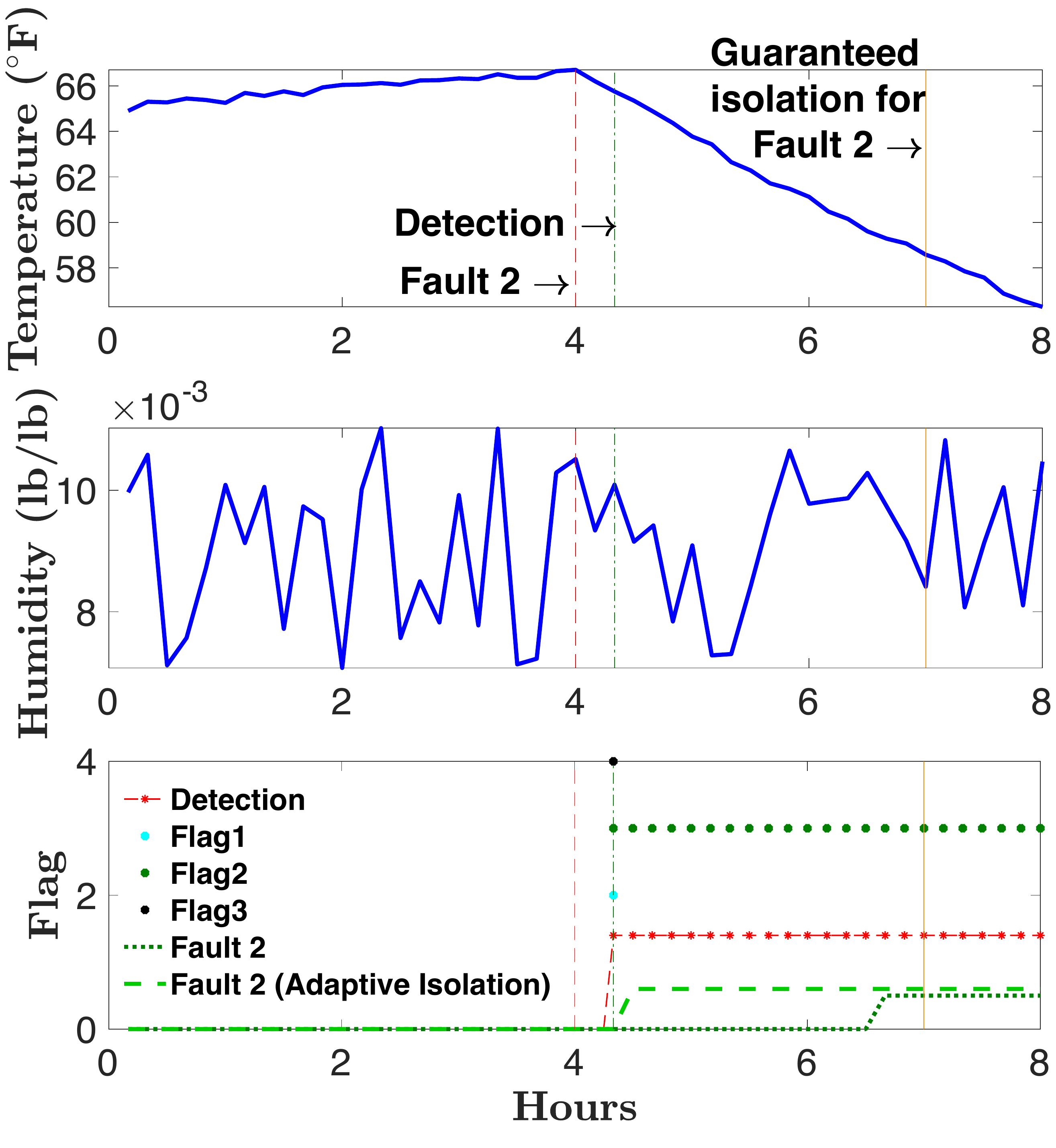}
	\end{minipage} \quad \
	\begin{minipage}[right]{2.1in}
		\includegraphics[width=2.1in,trim=0mm 12mm 0mm 0mm]{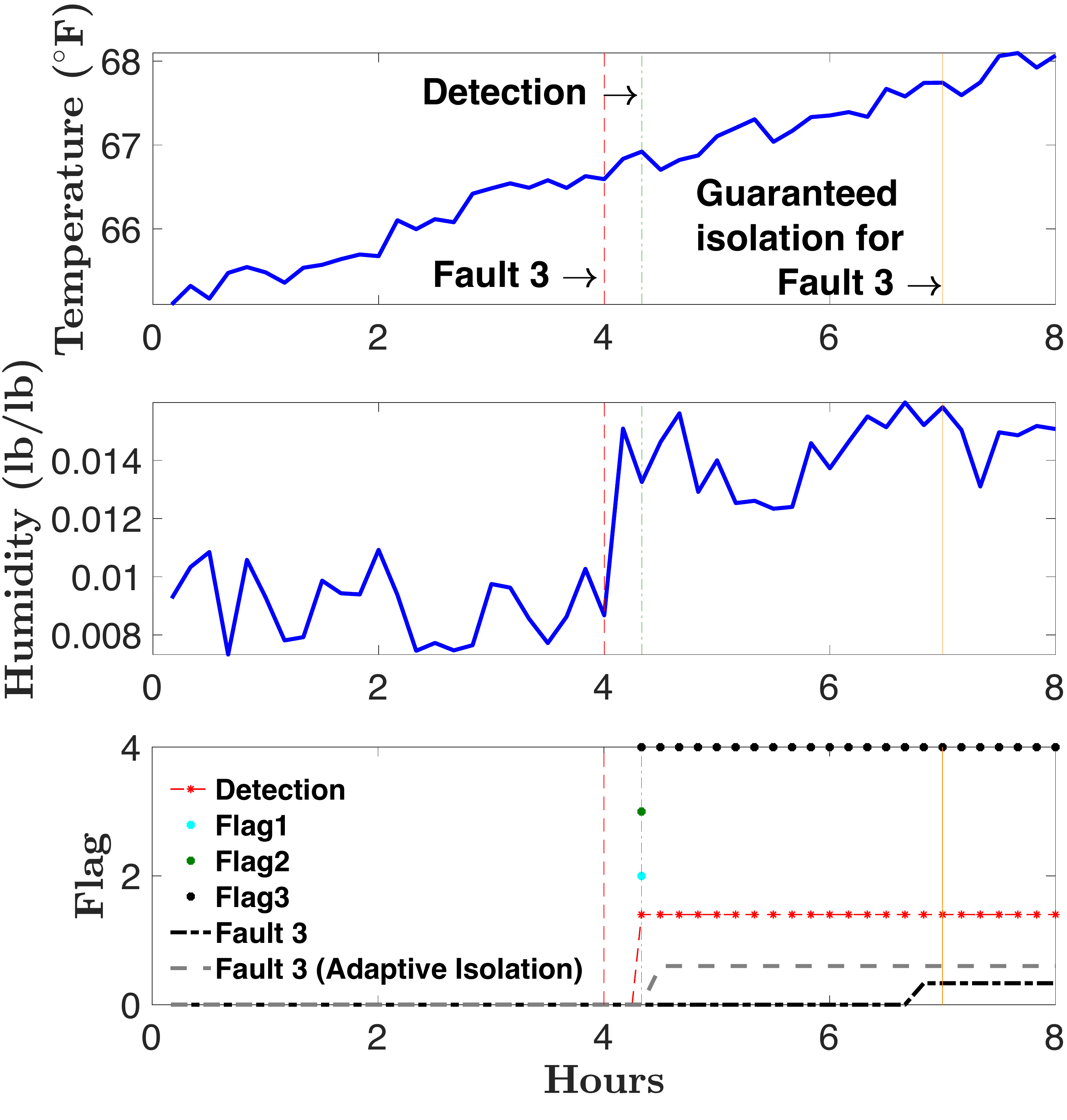}
	\end{minipage}
	\end{center}
	\caption{The outputs (top two rows) of 3 fault scenarios; Detection, isolation and adaptive isolation signals for all faults (bottom row). Flag $i$ is non-zero when the model invalidation problem associated with fault $i$ using the adaptive horizon length is validated. Adaptive isolation occurs when only one Flag is non-zero.}\vspace*{-0.65cm} \label{fig:results}
\end{figure*}
In \cite{Arguello1999Nonlinear}, a single-zone HVAC system in cooling mode (cf. schematic in Fig. \ref{fig:hvacscheme}) is considered. 
This HVAC system is represented by a non-linear model as follows:

\vspace*{-0.15cm}
{\scriptsize \begin{equation} \label{eqn:nonmodel}
\begin{aligned}
\begin{pmatrix}
\dot{T}_{TS} \\
\dot{W}_{TS} \\
\dot{T}_{SA} 
\end{pmatrix} = & \begin{pmatrix}
-\frac{f}{V_s} & \frac{h_{fg}f}{C_pV_s} & \frac{f}{V_s}  \\
0 & -\frac{f}{V_s} & 0 \\
0.75\frac{f}{V_{he}} & -0.75\frac{fh_w}{C_pV_{he}} & - \frac{f}{V_{he}} 
\end{pmatrix}\begin{pmatrix}
{T}_{TS} \\
{W}_{TS} \\
{T}_{SA} 
\end{pmatrix}+ \\  
& \begin{pmatrix}
- \frac{h_{fg}f}{C_pV_s}W_s + \frac{4}{C_pV_s}(Q_o -h_{fg}M_o)\\
\frac{f}{V_s}W_s + \frac{M_o}{\rho V_s}\\
\frac{f}{4V_{he}} (T_o-\frac{h_w}{C_p}W_o) + \frac{fh_w}{C_pV_{he}} W_s - 6000 \frac{gpm}{\rho C_p V_{he}}\\
\end{pmatrix},
\end{aligned}
\end{equation}}
\hspace{-0.135cm}where $f$, $gpm$, $M_o$  and $Q_o$ are time varying parameters. The parameters of the model\footnote{$h_w=180, \; h_{fg}=1078.25, \; W_o=0.018, \; W_s=0.007, \; C_p=0.24, \; T_o=85, \; V_s=58464, \;V_{he}=60.75, \; M_o\in [150 \; \; 180], \; Q_o \in [289800\; \; 289950], \; \rho=0.074, \; f = 17000, \; gpm \in \{0,58\}$.} are defined in \cite{Arguello1999Nonlinear}. 

We leverage an augmented state-space model with additional states $Q_0$ and $M_0$ that is obtained in \cite{Arguello1999Nonlinear}. To further simplify the model, we assume that the fan is always turned on and the flow rate is fixed at $17000$ ft$^3$/min and the chiller pump is either ``off'' or ``on'' with a fixed flow rate of $58$ gal/min. These assumptions along with a discretization with a sampling time of $10$ minutes convert the nonlinear system 
\eqref{eqn:nonmodel} to a switched affine model 
parameterized by

 \vspace{-0.2cm}
{\scriptsize \begin{equation*} 
	\begin{aligned} 
	& A_1 = A_2=\begin{pmatrix}
	0.98 & 229.63 & 0.001 & 0 & -0.0035\\ 
	0 & 0.94 & 0 & 0 & 0\\ 
	0.74 & -360.61 & 0.0008 & 0 & -0.0030\\
	0 & 0 & 0 & 1 & 0\\
	0 & 0 & 0 & 0 & 1
	\end{pmatrix}\hspace{-0.05cm}, \; f_2 \hspace{-0.05cm}=\hspace{-0.05cm} \begin{pmatrix}
	0.3886\\
	0.0001 \\
	-22.576 \\
	0\\
	0
	\end{pmatrix}\hspace{-0.05cm}, \\
	&  C_1 = C_2 =\begin{pmatrix}
	1 & 0 & 0 & 0 & 0 \\
	0 & 1 & 0 & 0 & 0 
	\end{pmatrix}\hspace{-0.05cm}, \;f_1 =\mathbf{0}. 
	\end{aligned}
	\end{equation*}} \vspace{-0.2cm}
	
The first three states in the SWA model represent the deviation of $T_{TS}, W_{TS}$ and $T_{SA}$ from their equilibria and the last two states are $Q_0$ and $M_0$. In addition, the HVAC model is represented by $\G_{H}=(\X,\E,\U,\{G_i\}_{i=1}^2)$, where $\X = \{\x \mid [-100 \; -0.05 \; -50 \; 289800 \; 150]^\intercal \leq \x \leq [100\; 0.05 \; 50 \; 289950 \; 180]^\intercal  \}$, $\E = \{ \ETA \mid |\ETA|\leq [0.2 \; 0.002]^\intercal \}$ and $\U = \emptyset$. The last two bounds on the states are for the augmented states, which are assumed to stay within a small range of their equilibria. The first mode corresponds to chiller being ``on'' and the second mode represents the model when it is ``off''. The controller keeps the temperature in the comfort zone of $65$--$75^\circ F$ by turning the chiller on and off. Control signals are not observed by the FDI scheme.

\begin{figure}[t!]
	\centering
	\includegraphics[width=1.6in,trim=2mm 2.5mm 2mm 3.5mm]{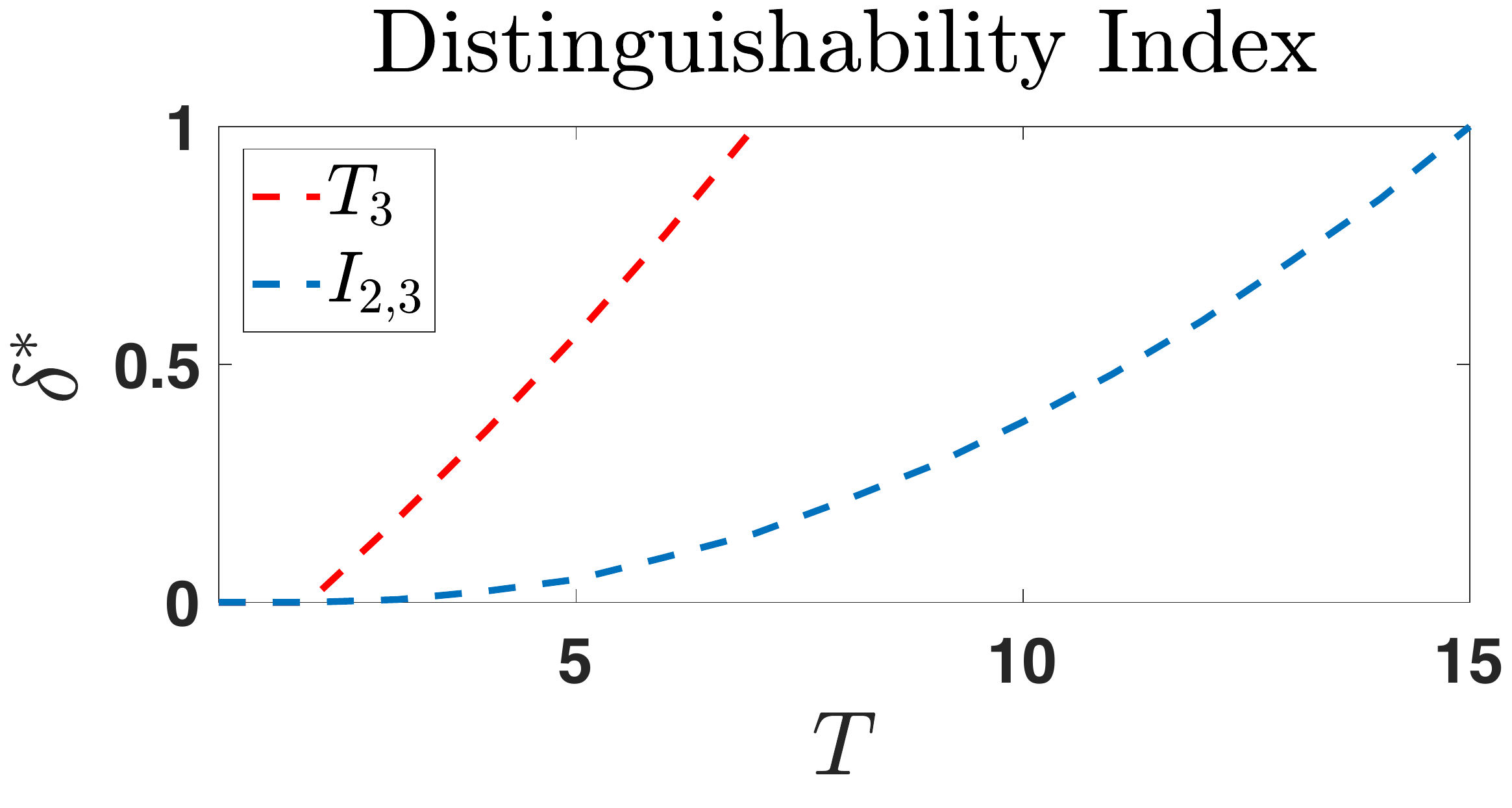} \ \ \
	\includegraphics[width=1.6in,trim=2mm 2.5mm 2mm 3.5mm]{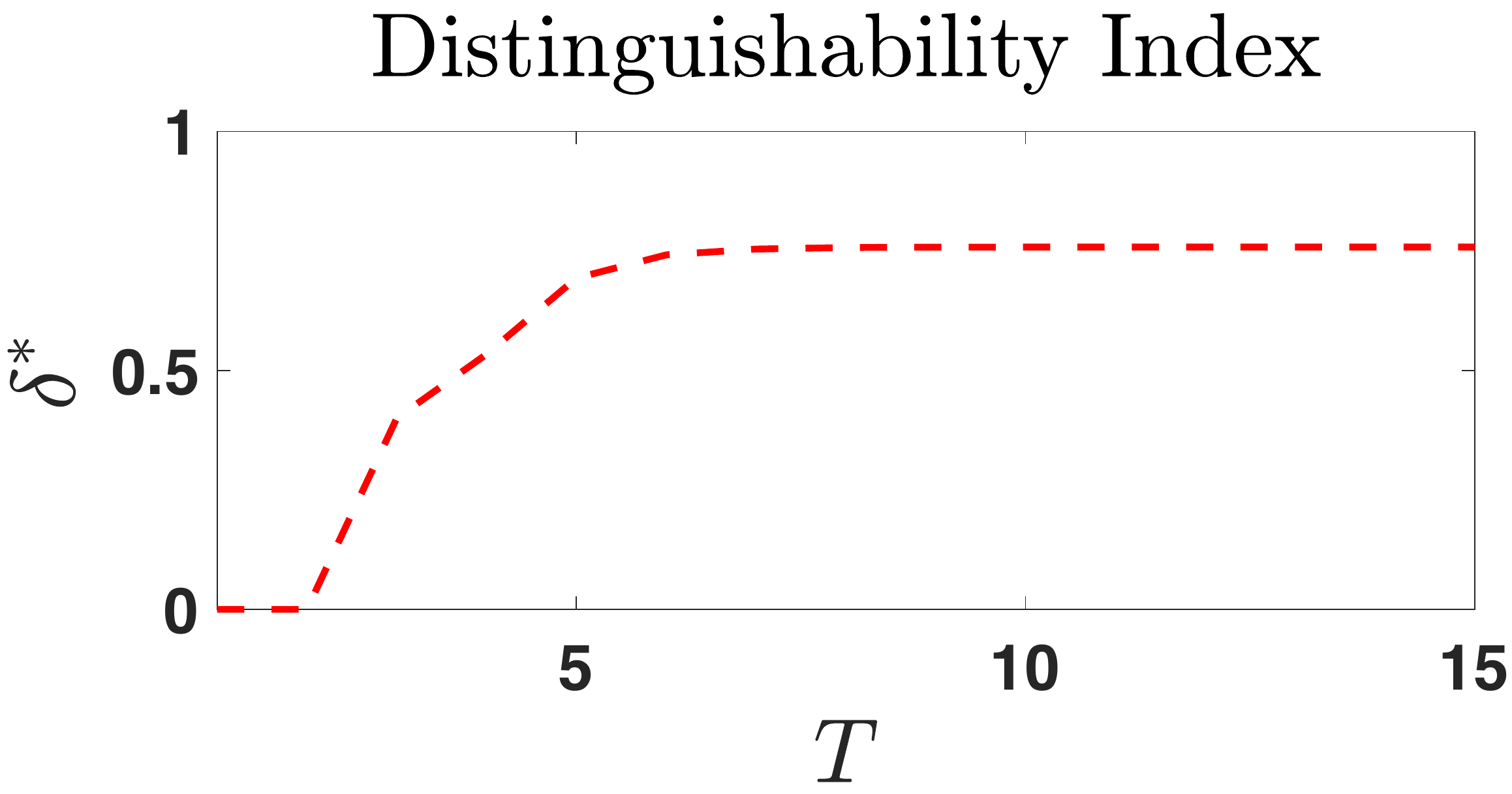}
	\caption{Distinguishability index as a function of length of time horizon. Left: Increase in detectability index for fault 3, $T_3$ and isolability index of faults 2 and 3, $I_{2,3}$, for the HVAC example; Right: Nonlinear increase with a plateau at around $T=5$, for numerical example described by \eqref{eqn:nummodel}.}  \label{fig:distindex}\vspace{-0.15cm}
\end{figure}

We consider three fault scenarios:
\begin{enumerate}
	\item Faulty fan: The fan rotates at half of its nominal speed.
	\item Faulty chiller water pump: The pump is stuck and spins at half of its nominal speed.
	\item Faulty humidity sensor: The humidity measurements are biased by an amount of +0.005. \vspace{-0.05cm}
\end{enumerate}
The proposed approach for $T$-distinguishability and $I$-isolability gives us the following results:

\vspace{-0.1cm}
\begin{table}[h!] \scriptsize
	\centering 
	\caption{Detectability and Isolability Indices} 
	\renewcommand{\arraystretch}{1.1}
	\begin{tabular}{llllll}	\hline
	$T_1 = 4$ & $T_2 = 8$ & $T_3 = 16$ & $I_{1,2}=4$ & $I_{1,3}=4$ & $I_{2,3}=16$ \\ \hline
	\end{tabular}
\end{table}

To illustrate the growth in the distinguishability index $\delta^*$ as the time horizon increases,  we plot its trend  in Fig. \ref{fig:distindex} (left) for $T$-distinguishability of fault 3 and $I$-isolability of faults 2 and 3. The plot shows that the distinguishability index we introduced does indeed deliver a nice measure of how far two models are from detectability or isolability, and at the same time, it allows us to estimate the size of time horizon, $T$ or $I$, to achieve $T$-distinguishability or $I$-Isolability.

Next, we consider 3 scenarios, where for each scenario $i$ ($i\in\{1,2,3\}$), we generate data from the nominal system for four hours and from fault $i$ afterwards. The times at which the faults occur and their detection times, as well as the upper bounds on isolation delays are indicated in Fig. \ref{fig:results} (top and middle rows), which show the output trajectories for each scenario. Furthermore, we plot in Fig. \ref{fig:results} (bottom row) the detection and isolation signals for all three faults to show that only the occurred fault is isolated in all scenarios before their upper bounds are exceeded, and that the proposed adaptive isolation scheme reduces the isolation delay, as desired. 

 \subsection{Distinguishability Index} \label{sec:distinguishability}
To illustrate the practical use of the distinguishability index, $\delta^*$, we consider two synthetic SWA models $\G$ and $\bar{\G}$ subject to measurement and process noise, given by

 \vspace{-0.2cm}
{\scriptsize \begin{equation} \label{eqn:nummodel}
	\begin{aligned}
	& \G: \begin{cases}
		& A_1 =\begin{pmatrix}
		0.1 & 0 & 0.1 \\ 
		0 & 0.1 & 0.2 \\ 
		0.2 & 0.12 & 0 
		\end{pmatrix}, A_2 =\begin{pmatrix}
		0 & 0 & 0.15 \\ 
		0.1 & 0 & 0 \\ 
		0.1 & 0.12 & 0.1 
		\end{pmatrix},\\
		&  C_1 = C_2 = I , \;f_1 =\begin{pmatrix}
		0.5\\
		0.2 \\
		1
		\end{pmatrix}, \; f_2 = \begin{pmatrix}
		1\\
		0 \\
		0.5
		\end{pmatrix},
	\end{cases}\\
	& \bar{\G}: \begin{cases}
	& \bar{A}_1 =\begin{pmatrix}
	0.1 & 0 & 0.1 \\ 
	0 & 0.1 & 0.2 \\ 
	0.2 & 0.1 & 0 
	\end{pmatrix}, \bar{A}_2 =\begin{pmatrix}
	0 & 0 & 0.1 \\ 
	0.1 & 0 & 0 \\ 
	0.1 & 0.1 & 0.1 
	\end{pmatrix},\\
	&  \bar{C}_1 = \bar{C}_2 = I , \;\bar{f}_1 =\begin{pmatrix}
	0.3\\ 
	0 \\
	0.9
	\end{pmatrix}, \; \bar{f}_2 = \begin{pmatrix}
	0.8\\  
	0.2 \\
	0.3
	\end{pmatrix},
	\end{cases}
	\end{aligned}
	\end{equation}}
\hspace{-0.2cm}where the rest of the parameters are zero. The bounds on the process and measurement noise are set to be $0.2$ and $0.25$, respectively.
Fig. \ref{fig:distindex} (right) depicts the change of the distinguishability index with increasing $T$. We observe that the distinguishability index increases nonlinearly and reaches a plateau at a value less than one. In this case, the distinguishability index $\delta^*$ provides a practical indication that these two models are very unlikely 
to be isolable for any finite $I$. Moreover, if the noise levels are constrained to be below the value of the plateau, then we can be sure that these faults will be isolable. Hence, the distinguishability index can also be exploited to derive the maximum allowed uncertainty for a system such that certain faults are guaranteed to be detectable or isolable. In turn, this suggests possible measures for ensuring fault detection and isolation through the reduction of noise levels, either with a better choice of sensors or with the use of noise isolation platforms.


\section{Conclusion} \label{sec:conclude}
We considered the FDI problem for switched affine models. For fault detection, we proposed new model invalidation and $T$-distinguishability formulations using SOS-1 constraints, that are demonstrated to be computationally more efficient and do not require any complicated change of variables. Further, we introduced the \emph{distinguishability index} as a measure of separation between the system and fault models and showed that this index is also a practical tool for finding the smallest receding time horizon that is needed for fault detection and for recommending system design changes for ensuring fault detection. Finally, a novel approach is proposed for isolation of a set of faults, 
with proven isolation guarantees under certain conditions. The effectiveness of the proposed approaches is illustrated on an HVAC system. 
\bibliographystyle{unsrt}
\bibliography{modelinvalidation,fault}

\end{document}